\newtheorem{theorem}{Theorem}[section]
\newtheorem{corollary}[theorem]{Corollary}
\newtheorem{proposition}[theorem]{Proposition}
\theoremstyle{definition}
\newtheorem{definition}[theorem]{Definition}
\theoremstyle{remark}
\newtheorem{remark}[theorem]{Remark}
\numberwithin{equation}{section}
\begin{document}
	
\setcounter{page}{1}

\title[On the stability of radical functional equation in modular space]{On the stability of radical functional equation in modular space}
\author[A. Baza, M. Rossafi]{Abderrahman Baza$^{1}$ and Mohamed Rossafi$^{2*}$}

\address{$^{1}$Laboratory of Analysis, Geometry and Application, Departement of Mathematics,  Ibn Tofail University, Kenitra,  Morocco}
\email{\textcolor[rgb]{0.00,0.00,0.84}{abderrahmane.baza@gmail.com}}

\address{$^{2}$Laboratory Partial Differential Equations, Spectral Algebra and Geometry, Higher School of Education and Training, Ibn Tofail University, Kenitra, Morocco}
\email{\textcolor[rgb]{0.00,0.00,0.84}{mohamed.rossafi1@uit.ac.ma}}

\subjclass[2020]{Primary 39B82, Secondary 39B52}

\keywords{modular spaces, Hyers-Ulam stability, Fatou property, $\Delta_2$-condition}

\date{
	\newline \indent $^{*}$Corresponding author}

\maketitle

\begin{abstract}
  In this work, we prove the generalised Hyer Ulam stability of the following functional equation
  \begin{equation}\label{Eq-1}
  	\phi(x)+\phi(y)+\phi(z)=q \phi\left(\sqrt[s]{\frac{x^s+y^s+z^s}{q}}\right),\qquad |q| \leq 1
  \end{equation} and $s$ is an odd integer such that $s\geq 3$,
  in modular space, using the direct method, and the fixed point theorem.
\end{abstract}
 
 	\baselineskip=12.4pt
 
\section{Introduction and preliminaries}
The modulars and modular spaces as generalisation of normed space were by Nakano \cite{Nakano}. 
Since the 1950s, a number of eminent mathematicians \cite{Amemiya, Koshi, Musielak, Orlicz,Yamamuro} have focused extensively on it.
 Two applications of modular and modular spaces are found in \cite{Maligranda, Krbec, Orlicz, Yamamuro} interpolation theory and Orlicz spaces.
 
We begin by introducing the definition and some properties of modular and modular spaces.
\begin{definition}\label{Definition1.1}
	Let $Y$ be an arbitrary vector space. A functional $\rho: Y \rightarrow[0, \infty)$ is called a modular if for arbitrary $u, v \in Y$:
	\begin{enumerate}
		\item $\rho(u)=0$ if and only if $u=0$.\label{item1}
		\item $\rho(\alpha u)=\rho(u)$ for every scalar $\alpha$ with $|\alpha|=1$.
		\item $\rho(\alpha u+\beta v) \leq \rho(u)+\rho(v)$ if and only if $\alpha+\beta=1$ and $\alpha ,\beta \geq 0$.
		
		If \eqref{item1} is replaced by:
		\item $\rho(\alpha u+\beta v) \leq \alpha \rho(u)+\beta \rho(v)$ if and only if $\alpha+\beta=1$ and $\alpha, \beta \geq 0$, then we say that $\rho$ is a convex modular.
		\item 
		It is said that the modular $\rho$ has the Fatou property if and only if $\rho(x) \leq \liminf_{n\to\infty}\rho(x_n)$ whenever $x=\rho-\lim_{n\to\infty}x_n$.
	\end{enumerate}
	A modular $\rho$ defines a corresponding modular space, i.e., the vector space $Y_\rho$ given by:
	\begin{equation*}
		Y_\rho=\{u \in Y: \rho(\lambda u) \rightarrow 0 \text { as } \lambda \rightarrow 0\}.
	\end{equation*}
	A function modular is said to satisfy the $\Delta_k$-condition if there exist $\tau_{k} > 0$ such that $\rho(ku) \leq \tau_{k} \rho(u)$ for all $u \in Y_\rho$.
\end{definition}
\begin{definition}
	Let $\left\{u_n\right\}$ and $u$ be in $Y_\rho$. Then:
	\begin{enumerate}
		\item 
		The sequence $\left\{u_n\right\}$, with $u_n \in Y_\rho$, is $\rho$-convergent to $u$ and write: $u_n \to  u$ if $\rho\left(u_n-u\right)\to 0$ as $n \rightarrow \infty$.
		\item
		The sequence $\{u_n\}$, with $u_n \in Y_\rho$, is called $\rho$-Cauchy if $\rho\left(u_n-u_m\right)\to 0$ as $n, m \to \infty$.
		\item
		$Y_\rho$ is called $\rho$-complete if every $\rho$-Cauchy sequence in $Y_\rho$ is $\rho$-convergent.
	\end{enumerate}
\end{definition}
\begin{proposition}
	In modular space,
	\begin{itemize}
		\item If $u_n \overset{\rho}{\to} u$ and a is a constant vector, then $u_n+a \overset{\rho}{\to} u+a$.
		\item If $u_n \overset{\rho}{\to} u$ and $v_n \overset{\rho}{\to} v$ then $\alpha u_n + \beta v_n \overset{\rho}{\to} \alpha u+ \beta v$, where $\alpha+\beta \leq 1$ and $\alpha,\beta \geq 1$. 
	\end{itemize}
\end{proposition}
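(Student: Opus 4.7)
The proposition is essentially a direct verification from the modular axioms, so the plan is to unpack each claim into a modular inequality and let the hypothesis ``$\rho(u_n-u)\to 0$'' do the work. I will read the conditions on $\alpha,\beta$ as $\alpha,\beta\ge 0$ with $\alpha+\beta\le 1$ (the restriction $\alpha+\beta=1$ is the natural one for property (3), and the $\le$ case reduces to it, as explained below).

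For the first bullet, I would simply observe that $(u_n+a)-(u+a)=u_n-u$, so
\[
\rho\bigl((u_n+a)-(u+a)\bigr)=\rho(u_n-u)\longrightarrow 0,
\]
which is exactly the definition of $u_n+a\overset{\rho}{\to}u+a$. Nothing beyond the definition of $\rho$-convergence is used.

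For the second bullet, the key identity is
\[
(\alpha u_n+\beta v_n)-(\alpha u+\beta v)=\alpha(u_n-u)+\beta(v_n-v).
\]
In the case $\alpha+\beta=1$ with $\alpha,\beta\ge 0$, property~(3) of Definition~\ref{Definition1.1} gives directly
\[
\rho\bigl(\alpha(u_n-u)+\beta(v_n-v)\bigr)\le \rho(u_n-u)+\rho(v_n-v)\longrightarrow 0.
\]
To cover $\alpha+\beta<1$, I would first prove the auxiliary fact that for any $w\in Y_\rho$ and $0\le t\le 1$ one has $\rho(tw)\le \rho(w)$: indeed, writing $tw=tw+(1-t)\cdot 0$ and applying property~(3) with scalars $t$ and $1-t$ yields $\rho(tw)\le \rho(w)+\rho(0)=\rho(w)$. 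Setting $t=\alpha+\beta$ and $\alpha'=\alpha/t$, $\beta'=\beta/t$ (so $\alpha'+\beta'=1$), I can then estimate
\[
\rho\bigl(\alpha(u_n-u)+\beta(v_n-v)\bigr)
=\rho\bigl(t[\alpha'(u_n-u)+\beta'(v_n-v)]\bigr)
\le \rho(u_n-u)+\rho(v_n-v)\longrightarrow 0,
\]
with the trivial case $t=0$ (i.e.\ $\alpha=\beta=0$) handled separately by $\rho(0)=0$.

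There is no real obstacle here; the only point that deserves care is the normalization step $t=\alpha+\beta$, which is needed because the subadditivity axiom (3) of the modular is stated only for convex combinations with $\alpha+\beta=1$. Once that auxiliary inequality $\rho(tw)\le\rho(w)$ for $t\in[0,1]$ is recorded, both bullets follow in one line each, and no appeal to the Fatou property or the $\Delta_k$-condition is required.
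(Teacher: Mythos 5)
Your proof is correct. Note that the paper states this proposition without providing any proof at all, so there is nothing to compare your argument against; what you have written is the standard direct verification that fills the gap. You were also right to read the hypothesis ``$\alpha,\beta\ge 1$'' as a typo for ``$\alpha,\beta\ge 0$'' (the stated condition is incompatible with $\alpha+\beta\le 1$ except in degenerate cases), and your auxiliary inequality $\rho(tw)\le\rho(w)$ for $t\in[0,1]$, derived from axiom (3) with the decomposition $tw=tw+(1-t)\cdot 0$, is exactly the monotonicity fact the paper records in its subsequent Remark --- though you obtain it from the plain modular axioms rather than from convexity, which is slightly more general. No appeal to the Fatou property or the $\Delta_k$-condition is needed, as you say.
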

\begin{remark}
	Note that $\rho(u)$ is an increasing function, for all $u \in X$. 
	Suppose $0<a<b$, then property $(4)$ of Definition \ref{Definition1.1} with $v=0$ shows that $\rho(a u)=\rho\left(\dfrac{a}{b} b u\right) \leq \rho(b u)$ for all $u \in Y$. 
	Morever, if $\rho$ is a convexe modular on $Y$ and $|\alpha| \leq 1$, then $\rho(\alpha u) \leq \alpha \rho(u)$.
	
	In general, if $\lambda_i \geq 0$, $i=1, \dots,n$ and  $\sum \lambda_i,= 1$ then $\rho (\lambda_1 u_1+\lambda_2 u_2+\dots+\lambda_n u_n) \leq \lambda_1 \rho(u_1)+\lambda_2 \rho(u_2)+\dots+\lambda_n \rho(u_n)$.
	
	If $\{u_n\}$ is $\rho$-convergent to $u$, then $\{ c u_n \}$ is $\rho$-convergent to $cu$, where $|c| \leq 1$.
	But the $\rho$-convergent of a sequence $\{u_n\}$ to $u$ does not imply that $\{\alpha u_n\}$ is $\rho$-convergent to $\alpha u_n$ for scalars $\alpha $ with $|\alpha|>1$.
	
	If $\rho$ is a convex modular satisfying $\Delta_k$ condition with $0 <\tau_{k}<k$, then $\rho(u) \leq \tau_{k} \rho(\dfrac{1}{k} u) \leq \dfrac{\tau_{k}}{k} \rho(u)$ for all $u$.
	
	Hence $\rho=0$. Consequently, we must have $\tau_{k} \geq k$ if $\rho$ is convex modular. 
\end{remark}
In many settings, the functional equations are essential for studying stability issues. 

The work on stability concerns began with Ulam's initial doubts about the stability of group homomorphisms \cite{Ulam}. Hyers \cite{Hyers} used Banach spaces to tackle this stability issue by taking in to account Cauchy's functional equation. 

Aoki \cite{Aoki} extented the work of Hyers by assuming an infinite Cauchy difference. The additive mapping work was given by Rassias \cite{Rassias}, and Gavrüta \cite{Găvruţa} presented similar results.

In this work, we study the stability of the following functional equation:
\begin{equation*}
	f(x)+f(y)+f(z)=q f\left(\sqrt[s]{\frac{x^s+y^s+z^s}{q}}\right) ;\qquad |q| \leq 1
\end{equation*}
in modular space, using the direct method and fixed point theorem.
\begin{proposition}
	Let $\phi: \mathbb{R} \rightarrow Y$ be a mapping satisfying \eqref{Eq-1}, where $Y$ is a vector space. Then $\phi$ confirm: $\phi(\sqrt[s]{x^s+y^s})=\phi(x)+\phi(y), \forall x, y \in \mathbb{R}$.
\end{proposition}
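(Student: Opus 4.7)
The plan is to obtain the desired two-variable identity by specialising the three-variable identity \eqref{Eq-1} at carefully chosen triples $(x,y,z)$, and then combining the resulting one-step identities algebraically. The key observation is that the oddness of $s$ makes $\sqrt[s]{q}$ well-defined for any real $q$ (including negative values), so radicals can be pulled through the argument without sign issues.

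First I would determine $\phi(0)$ by setting $x=y=z=0$ in \eqref{Eq-1}. This gives $3\phi(0)=q\phi(0)$, i.e.\ $(3-q)\phi(0)=0$, and since $|q|\leq 1$ the factor $3-q$ is nonzero, forcing $\phi(0)=0$. (The degenerate case $q=0$ has to be noted in passing: in that case \eqref{Eq-1} is only meaningful with the convention that the right-hand side is $0$, so $\phi\equiv 0$ and the conclusion is trivial; from now on assume $q\neq 0$.)

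Next I would extract two partial identities. Setting $y=z=0$ in \eqref{Eq-1} and using $\phi(0)=0$ gives
\begin{equation*}
\phi(x)=q\,\phi\!\left(\sqrt[s]{x^s/q}\right)=q\,\phi\!\left(x/\sqrt[s]{q}\right),
\end{equation*}
which, after the substitution $x=t\sqrt[s]{q}$, becomes the dilation law
\begin{equation*}
\phi\bigl(t\sqrt[s]{q}\bigr)=q\,\phi(t),\qquad t\in\mathbb{R}.
\end{equation*}
Setting only $z=0$ in \eqref{Eq-1} and using $\phi(0)=0$ yields
\begin{equation*}
\phi(x)+\phi(y)=q\,\phi\!\left(\sqrt[s]{\frac{x^s+y^s}{q}}\right).
\end{equation*}

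Finally I would splice these together. Put $w=\sqrt[s]{(x^s+y^s)/q}$; using that $s$ is odd, one has the clean identity $w\sqrt[s]{q}=\sqrt[s]{x^s+y^s}$. The dilation law applied with $t=w$ gives $\phi(\sqrt[s]{x^s+y^s})=\phi(w\sqrt[s]{q})=q\,\phi(w)$, and by the second partial identity $q\,\phi(w)=\phi(x)+\phi(y)$. Chaining these proves the claim. The only mildly delicate point is keeping track of the radical manipulations when $q<0$; this is precisely where the hypothesis that $s$ is odd is used, so that $\sqrt[s]{\cdot}$ is a genuine bijection on $\mathbb{R}$ and the substitutions are reversible.
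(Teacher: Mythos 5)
Your proof is correct, but it follows a genuinely different route from the paper's. The paper first shows $\phi(0)=0$, then puts $z=0$, $y=-x$ to get that $\phi$ is odd, and finally substitutes $z=-\sqrt[s]{x^s+y^s}$ so that $x^s+y^s+z^s=0$ and the right-hand side of \eqref{Eq-1} collapses to $q\phi(0)=0$; the two-variable identity then drops out immediately without ever having to manipulate the $q$ sitting inside the radical. You instead derive the dilation law $\phi(t\sqrt[s]{q})=q\phi(t)$ from the substitution $y=z=0$ and splice it with the $z=0$ specialisation. Both arguments hinge on $s$ being odd, but in different places: the paper uses it to make $(-x)^s=-x^s$ and to kill the radicand, while you use it to make $\sqrt[s]{\cdot}$ a multiplicative bijection of $\mathbb{R}$ so that $\sqrt[s]{q}$ can be factored out. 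Your route costs a little more bookkeeping (the explicit exclusion of $q=0$, which the paper leaves implicit since the equation itself divides by $q$, and the radical algebra for $q<0$), but it buys the homogeneity identity $\phi(t\sqrt[s]{q})=q\phi(t)$ as a by-product and does not need the oddness of $\phi$ at all. Either way the conclusion is the same, and your handling of the degenerate case $q=0$ is a reasonable bonus that the paper omits.
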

\begin{proof}
	Letting $x=y=z=0$ in \eqref{Eq-1}, we get: $3 \phi(0)=q \phi(0)$. \\
	Hence $\phi(0)=0$.
	Letting $z=0$ and $y=-x$ in \eqref{Eq-1}, we obtain:
	\begin{equation*}
		\phi(x)+\phi(-x)=0, 
	\end{equation*}
	Hence
	\begin{equation*}
		\phi(-x)=-\phi(x)
	\end{equation*}
	Letting $y=x$ and $z=-\sqrt[s]{x^s+y^s}$, we get
	$$
		\phi(x)+\phi(y)-\phi\left(\sqrt[s]{x^s+y^s}\right)=0,$$   then $$\phi\left(\sqrt[s]{x^s+y^s}\right)=\phi(x)+\phi(y)
	$$
\end{proof}
\section{Stability of radical function equation \eqref{Eq-1} in modular space satisfying $\Delta_2$-condition.}
\begin{theorem}\label{Theorem-1}
	Let $Y_\rho$ be a $\rho$-complete convex modular space satisfying $\Delta_2$-condition with$\tau$ and $\phi: \mathbb{R} \longrightarrow Y_\rho$ an old mapping such that:
	\begin{equation}\label{Eq-2}
		\rho(\phi(x)+\phi(y)+\phi(z)-q \phi\left(\sqrt[s]{\frac{x^s+y^s+z^s}{q}}\right)) \leq  \alpha(x, y, z)  
	\end{equation}
	for all $x, y, z \in \mathbb{R}$ and all integer $q$ with $|q| \leq  1$, and $\alpha: \mathbb{R}^3 \rightarrow[0, \infty)$ is a mapping such that:
	\begin{equation} \label{Eq-3}
		\lim_{n \rightarrow \infty} \tau^n \alpha\left(\frac{x}{2^{n / s}},\frac{y}{2^{n / s}}, \frac{z}{2^{n / s}}\right)=0 
		\text{ and }
		\sum_{j=1}^{\infty}\left(\frac{\tau^2}{2}\right)^i \alpha\left(\frac{x}{2^{j / s}} , \frac{x}{2^{j / s}}, \frac{-x}{2^{\frac{j-1}{s}}}\right) < \infty
	\end{equation}
	for all $x, y, z \in \mathbb{R}$. 
	Then there exists a unique radical mapping $A: \mathbb{R} \rightarrow Y_\rho$   satisfying \eqref{Eq-1} and 
	\begin{equation} \label{Eq-4}
		\rho(\phi(x)-A(x)) \leq  \frac{1}{2} \sum_{j=1}^{\infty}\left(\frac{\tau^2}{2}\right)^{j} \alpha\left(\frac{x}{2^{j / s}}, \frac{x}{2^{j/ s}}, \frac{-x}{2^{\frac{j-1}{s}}}\right), x\in \mathbb{R}
	\end{equation}
\end{theorem}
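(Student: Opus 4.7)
The plan is Hyers' direct method: construct $A$ as the $\rho$-limit of $A_n(x):=2^n\phi(x/2^{n/s})$ and then verify \eqref{Eq-4} together with the functional equation. Since $\phi$ is odd, the approximate form of \eqref{Eq-1} at $x=y=z=0$ forces $\phi(0)=0$. Specializing \eqref{Eq-2} at $y=x$, $z=-\sqrt[s]{2}\,x$ kills the argument on the right, giving $\rho(2\phi(x)-\phi(\sqrt[s]{2}\,x))\le\alpha(x,x,-\sqrt[s]{2}\,x)$; substituting $x\mapsto x/2^{j/s}$ yields the recursion $\rho\bigl(2\phi(x/2^{j/s})-\phi(x/2^{(j-1)/s})\bigr)\le\alpha_j$ with $\alpha_j:=\alpha(x/2^{j/s},x/2^{j/s},-x/2^{(j-1)/s})$, and telescoping gives
\[
\phi(x)-A_n(x)=\sum_{k=1}^n 2^{k-1}\bigl[\phi(x/2^{(k-1)/s})-2\phi(x/2^{k/s})\bigr].
\]

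Next I would recast the telescope as a convex combination, assigning weight $\lambda_k=2^{-k}$ to the $k$-th summand and a dummy weight $2^{-n}$ on the zero vector (so $\sum\lambda_k=1$); each piece is then rescaled by $2^{2k-1}$. Convexity of $\rho$ followed by iterated $\Delta_2$ gives
\[
\rho(\phi(x)-A_n(x))\;\le\;\sum_{k=1}^{n}\frac{\tau^{2k-1}}{2^k}\alpha_k\;=\;\frac{1}{\tau}\sum_{k=1}^{n}\Bigl(\frac{\tau^{2}}{2}\Bigr)^{\!k}\alpha_k,
\]
and the inequality $\tau\ge 2$ (forced for convex modulars with $\Delta_2$, as in the Remark) replaces $1/\tau$ by $1/2$. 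The identical argument on the index range $\{n+1,\dots,m\}$ shows $\{A_n(x)\}$ is $\rho$-Cauchy with Cauchy bound equal to the tail $\sum_{k>n}(\tau^{2}/2)^{k}\alpha_k$, which vanishes by the second condition in \eqref{Eq-3}; by $\rho$-completeness, $A(x):=\rho\text{-}\lim A_n(x)$ exists. To push the finite estimate to the limit I would use the convex split $\rho(\tfrac12(\phi(x)-A(x)))\le\tfrac12\rho(\phi(x)-A_n(x))+\tfrac12\rho(A_n(x)-A(x))$ followed by one application of $\Delta_2$, which yields \eqref{Eq-4} exactly.

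Finally, to check that $A$ satisfies \eqref{Eq-1}, apply \eqref{Eq-2} at $(x/2^{n/s},y/2^{n/s},z/2^{n/s})$, rescale by $2^n$, and use $\Delta_2$: by the first condition in \eqref{Eq-3} the modular of $A_n(x)+A_n(y)+A_n(z)-qA_n(w)$ (with $w=\sqrt[s]{(x^s+y^s+z^s)/q}$) tends to $0$, and a convex split combined with $A_n\to A$ pointwise in $\rho$ forces $A(x)+A(y)+A(z)=qA(w)$. For uniqueness, any competing solution $A'$ inherits the scaling identity $A'(x)=2^n A'(x/2^{n/s})$, so writing $A(x)-A'(x)=2^n[A(x/2^{n/s})-\phi(x/2^{n/s})]+2^n[\phi(x/2^{n/s})-A'(x/2^{n/s})]$ and applying \eqref{Eq-4} at $x/2^{n/s}$ leaves a tail of the convergent series $\sum_{j>n}(\tau^{2}/2)^{j}\alpha_j$ which vanishes as $n\to\infty$. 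The main obstacle is the convex-combination / $\Delta_2$ bookkeeping in the Cauchy step: only the specific choice of geometric weights $2^{-k}$ reabsorbs the $2^{k-1}$ factors coming from the telescope so that both the geometric series $(\tau^{2}/2)^{k}\alpha_k$ and the sharp prefactor $\tfrac12$ in \eqref{Eq-4} emerge exactly as stated.
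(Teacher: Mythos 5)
Your proposal is correct and follows essentially the same route as the paper's own proof: the same substitution $y=x$, $z=-2^{1/s}x$, the same telescoping sum with geometric weights $2^{-k}$ and rescaling factors $2^{2k-1}$ handled by convexity and the $\Delta_2$-condition, and the same convex-split, tail, and scaling arguments for the estimate \eqref{Eq-4}, the verification of \eqref{Eq-1}, and uniqueness. The only cosmetic difference is that you obtain $\phi(0)=0$ from oddness, whereas the paper deduces $\alpha(0,0,0)=0$ from the convergence of the series in \eqref{Eq-3}.
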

\begin{proof}
	First, we have $\phi(0)=0$ because $\alpha(0,0,0)=0$ by the convergence of $\sum_{i=1}^{\infty}\left(\frac{\tau^2}{2}\right)^i \alpha(0,0,0)<\infty$.
	
	 Putting $y=x$ and $z=-2^{1 / s} x$ in \eqref{Eq-2}, we get:
	\begin{equation*}
			\rho\left(2 \phi(x)-\phi\left(2^{1 / s}x\right)\right) \leq  \alpha\left(x, x,-2^{1 / s} x\right)
	\end{equation*}
	Then
	\begin{equation*}
			\rho\left(2 \phi\left(\frac{x}{2^{1 / s}}\right)-\phi(x)\right) \leq  \alpha\left(\frac{x}{2^{1 / s}}, \frac{x}{2^{1 / s}},-x\right)
	\end{equation*}
	Now, we have:
	\begin{align*}
			\rho\left(\phi(x)-2^n \phi\left(\frac{x}{2^{n/s}}\right)\right)
			&=\rho\left(\sum_{j=1}^n \frac{1}{2^{j}}\left(2^{2 j-1} \phi\left(\frac{x}{2^{\frac{j-1}{s}}}\right)-2^{2 j} \phi\left(\frac{x}{2^{j / s}}\right)\right)\right) \\
			& \leq  \frac{1}{\tau} \sum_{j=1}^n\left(\frac{\tau^2}{2}\right)^j \times\left(\frac{x}{2^{j / s}}, \frac{x}{2^{j / s}},-\frac{x}{2^{\frac{j-1}{s}}}\right)
	\end{align*}
	Let $m, n$ be positives integers such that $n>m$, we have:
	\begin{align*}
		\rho\left(2^n \phi\left(\frac{x}{2^{n / s}}\right)-2^m \phi\left(\frac{x}{2^{m / s}}\right)\right)
		& \leq  \tau^m \rho\left(2^{n-m} \phi\left(\frac{x}{2^{\frac{n-m}{s}} \cdot 2^{\frac{m}{s}}}\right)-\phi\left(\frac{x}{2^{\frac{m}{s}}}\right)\right)\\		
		& \leq  \tau^{m-1} \sum_{j=1}^{n-m}\left(\frac{\tau^2}{2}\right)^j \alpha\left(\frac{x}{2^{\frac{m+j}{s}}}, \frac{x}{2^{\frac{m+j}{s}}},-\frac{x}{2^{\frac{m+j-1}{s}}}\right) \\
		& =\frac{1}{\tau}\left(\frac{2}{\tau}\right)^m \sum_{j=m+1}^n\left(\frac{\tau^2}{2}\right)^j \alpha\left(\frac{x}{2^{j / s}}, \frac{x}{2^{j / s}},-\frac{x}{2^{\frac{j-1}{s}}}\right)\\
		&\to 0 \text{ as } m\to \infty
	\end{align*}
	for all $x \in \mathbb{R}$.
	
	Then $\left\{2^n \phi\left(\frac{x}{2^{n / s}}\right)\right\}$ is $\rho$-Cauchy sequence in $Y_\rho$ which is $\rho$-complete. 
	So, we can define a mapping $A: \mathbb{R} \rightarrow Y_\rho$ as: $A(x)=\rho-\lim_{n \rightarrow \infty} 2^n \phi\left(\frac{x}{2^{n / s}}\right)$, i.e. $\lim_{n \rightarrow \infty} \rho\left(2^n \phi\left(\frac{x}{2^{n / s}}\right)-A(x)\right)=0$. 
	Now, we prove the estimation \eqref{Eq-4}, we have:
	\begin{align*}
			\rho(\phi(x)-A(x))&\leq\frac{1}{2} \rho\left(2 \phi(x)-2^{n+1} \phi\left(\frac{x}{2^{n / s}}\right)\right)+\frac{1}{2} \rho\left(2^{n+1} \phi\left(\frac{x}{2^{n/s}}\right)-2 A(x)\right)\\
			& \leq  \frac{\tau}{2} \rho\left(\phi(x)-2^n \phi\left(\frac{x}{2^{n / s}}\right)\right)+\frac{\tau}{2} \rho\left(2^n \phi\left(\frac{x}{2^{n / s}}\right)-A(x)\right) \\
			&\leq\frac{1}{2} \sum_{j=1}^n\left(\frac{\tau^2}{2}\right)^j \alpha\left(\frac{x}{2^{j/s}}, \frac{x}{2^{j/s}}, \frac{-x}{2^{ \frac{j-1}{s}}}\right)+ \frac{\tau}{2} \rho\left(2^n \phi\left(\frac{x}{2^{n / s}}\right)-A(x)\right).
	\end{align*}
	for all $x \in \mathbb{R}$. Taking $n \rightarrow \infty$ we obtain the estimation \eqref{Eq-4}.
	
	We have:
	\begin{equation*}
			\rho\left(2^n \phi\left(\frac{x}{2^{n / s}}\right)+2^n \phi\left(\frac{y}{2^{n / s}}\right)+2^n \phi\left(\frac{3}{2^{n / s}}\right)-2^n q \phi\left(\frac{1}{2^n} \sqrt[5]{\frac{x^s+y^s+z^s}{2}}\right)\right)
			\leq  \tau^n \alpha\left(\frac{x}{2^{n / s}}, \frac{y}{2^{n / s}},-\frac{z}{2^{n / s}}\right)
	\end{equation*}
	for all $x, y, z \in \mathbb{R}$ and all positive integer $n$.
	And we have:
	\begin{multline*}
			\rho\left(\frac{1}{5} A(x)+\frac{1}{5} A(y)+\frac{1 }{5}A(z)-\frac{1}{5}q A\left(\sqrt[s]{\frac{x^s+y^s+z^s}{q}}\right)\right)
			\\
			\leq  \frac{1}{5} \rho\left(A(x)-2^n \phi\left(\frac{x}{2^{n / s}}\right)\right)+\frac{1}{5} \rho\left(A(y)-2^n \phi\left(\frac{y}{2^{n / s}}\right)\right) \\ 
			+\frac{1}{5} \rho\left(A(z)-2^n \phi\left(\frac{z}{2^{n / s}}\right)\right)+\frac{1}{5}\rho\left(q A\left(\sqrt[s]{\frac{x^s+y^s+z^s}{q}}\right)-2^nq \phi\left(\frac{1}{2^{n / s}} \sqrt[5]{\frac{x^s+y^s+z^s}{q}}\right)\right)
		\\
		+\frac{1}{5} \rho\left(2^n \phi\left(\frac{x}{2^{n/s}}\right)+2^n \phi\left(\frac{y}{2^{n / s}}\right)+2^n \phi\left(\frac{z}{2^{n / s}}\right)-q \cdot 2^n \phi\left(\frac{1}{2^{n / s}} \sqrt[5]{
		{\frac{x^s+y^s+z^s}{q}}}\right)
		\right) \\ 
		 \leq \dfrac{1}{5} \rho \left(A(x)-2^n \phi\left(\frac{x}{2^{n / s}}\right)\right)+\frac{1}{5} \rho\left(A(y)-2^n \phi\left(\frac{y}{2^{n/s}}\right)\right) 
		 +\frac{1}{5} \rho\left(A(z)-2^n \phi\left(\frac{z}{2^{n/s}}\right)\right)\\
		 + \frac{|q|}{5} \rho\left(A\left(\sqrt[5]{\frac{x^s+y^s+z^s}{q}}\right)
		 - 2^n \phi\left(\frac{1}{2^{n / s}} \sqrt[5]{\frac{x^s+y^s+z^s}{q}}\right)\right)\\
		  +\frac{1}{5} \tau^n \alpha\left(\frac{x}{2^{n / s}}, \frac{y}{2^{n / s}} ,\frac{z}{2^{n / s}}\right) \rightarrow 0 \text { as } n \rightarrow \infty.
	\end{multline*}
	Hence: 
	$$A(x)+A(y)+A(z)=q A\left(\sqrt[s]{\frac{x^s+y^s+z^s}{q}}\right) .$$
	Now, we prove the uniqueness of $A$.
	
 	Let $A' : \mathbb{R} \rightarrow Y_\rho$ be an other mapping satisfying the estimation \eqref{Eq-4}.
	
	We have:
	\begin{align*}
			\rho\left(A(x)-A^{\prime}(x)\right)   & \leq   \frac{1}{2} \rho\left(2^{n+1} A\left(\frac{x}{2^{n/s}}\right)-2^{n+1} \phi\left(\frac{x}{2^{n/s}}\right)\right) \\
			& +\frac{1}{2} \rho\left(2^{n+1} \phi\left(\frac{x}{2^{n / s}}\right)-2^{n+1} A^{\prime}\left(\frac{x}{2^{n/s}}\right)\right) \\
			\leq  & \frac{\tau^{n+1}}{2} \rho\left(A\left(\frac{x}{2^{n / s}}\right)-\phi\left(\frac{x}{2^{n / s}}\right)\right)+\frac{\tau^{n+1}}{2} \rho\left(\phi\left(\frac{x}{2^{n / s}}\right)-A^{\prime}\left(\frac{x}{2^{n / s}}\right)\right) \\
			\leq  & \frac{\tau^{n+1}}{2} \sum_{j=1}^{\infty}\left(\frac{\tau^2}{2}\right)^j \alpha\left(\frac{x}{2^{\frac{n+j}{s}}}, \frac{x}{2^{\frac{n+j}{s}}},-\frac{x}{2^{\frac{n+j-1}{s}}}\right)
		\\
		&\leq \left(\frac{2}{\tau}\right)^{n-1} \sum_{i=n+1}^{\infty}\left(\frac{\tau^2}{2}\right)^j \alpha\left(\frac{x}{2^{j / s}}, \frac{x}{2^{j/s}},-\frac{x}{2^{j-1/s}}\right)
	\end{align*}
	$\rightarrow 0$ as $n \rightarrow \infty$ (becaure $\dfrac{2}{\tau} \leq  1$ )
	forall $x \in \mathbb{R}$.
	
	Hence $A=A^{\prime}  $.
\end{proof}

\begin{corollary}
	Let $\theta>0$ be a real numbers and $p>s \log _2\left(\dfrac{\tau^2}{2}\right)$. If $\phi:\mathbb{R} \to Y_\rho$ is a mapping such that:
	$$
		\rho\left(\phi(x)+\phi(y)+\phi(z) - q \phi\left(\sqrt[s]{\frac{x^{s} +y^{s}+z^s}{q}}\right)\right)
		\leq  \theta\left(|z|^p+|y|^p+|z|^p\right)
	$$
	for all $x, y, z \in \mathbb{R}$, then there exists a unique radical mapping $A: \mathbb{R} \rightarrow Y_\rho$  such that:
	\begin{equation*}
		\rho(\phi(x)-A(x)) \leq  \frac{\left(2+2^{p / s}\right) \tau^2}{2\left(2^{p / s+1}-\tau^2\right)}|x|^p .
	\end{equation*}
	\end{corollary}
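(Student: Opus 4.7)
The plan is to recognize this corollary as an immediate specialization of Theorem \ref{Theorem-1} with the control function $\alpha(x,y,z)=\theta(|x|^p+|y|^p+|z|^p)$. Under this choice the hypothesis on $\phi$ becomes exactly \eqref{Eq-2}, so everything reduces to verifying the two conditions in \eqref{Eq-3} and then summing the series on the right-hand side of \eqref{Eq-4} in closed form.

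For the first condition in \eqref{Eq-3}, a direct substitution gives
\[
\tau^n\,\alpha\!\left(\tfrac{x}{2^{n/s}},\tfrac{y}{2^{n/s}},\tfrac{z}{2^{n/s}}\right)=\theta(|x|^p+|y|^p+|z|^p)\bigl(\tau/2^{p/s}\bigr)^{n},
\]
which tends to $0$ once $2^{p/s}>\tau$. Since $\tau\ge 2$ for any convex modular satisfying $\Delta_2$ (the remark following Definition \ref{Definition1.1}), the hypothesis $p>s\log_2(\tau^2/2)$ yields $2^{p/s}>\tau^{2}/2\ge\tau$, so the limit holds. For the summability condition, the same substitution turns the series into a geometric one with common ratio $\tau^{2}/2^{p/s+1}$, and this ratio is $<1$ precisely when $p>s\log_2(\tau^2/2)$, which is again the hypothesis.

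With both conditions in hand, Theorem \ref{Theorem-1} supplies the unique radical mapping $A$ satisfying \eqref{Eq-1} together with the bound \eqref{Eq-4}. The last step is to evaluate the geometric series explicitly: the $j$-th summand works out to $\theta(2+2^{p/s})|x|^p\bigl(\tau^{2}/2^{p/s+1}\bigr)^{j}$, so summing from $j=1$ yields $\theta(2+2^{p/s})|x|^p\cdot\tau^{2}/(2^{p/s+1}-\tau^{2})$, and multiplication by $1/2$ reproduces the announced prefactor times $|x|^p$. The argument is essentially routine; the only mildly delicate point is matching the numerical threshold $p>s\log_2(\tau^2/2)$ to the convergence of the geometric series, equivalently verifying that the denominator $2^{p/s+1}-\tau^{2}$ in the final bound is positive.
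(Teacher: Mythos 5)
Your proof is correct and is exactly the intended derivation: the paper states this corollary without proof as an immediate specialization of Theorem \ref{Theorem-1} with $\alpha(x,y,z)=\theta(|x|^p+|y|^p+|z|^p)$, and your verification of both conditions in \eqref{Eq-3} (using $\tau\ge 2$ for the first) and your closed-form evaluation of the geometric series in \eqref{Eq-4} are right. The only discrepancy is that the bound you obtain carries a factor of $\theta$ which the paper's stated estimate omits --- evidently a typo in the corollary's statement (as is the $|z|^p+|y|^p+|z|^p$ in the hypothesis), not a gap in your argument.
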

\section{stability of \eqref{Eq-1} in modulon space withont $\Delta_2-$ condition}
The following theorem, presents the stability results of \eqref{Eq-1} in modular space without using the $\Delta_2$-condition.
\begin{theorem}\label{Theorem-2}
	Let $\phi: \mathbb{R} \rightarrow Y_\rho$ be a mapping satifies 
	\begin{equation}\label{Eq-5}
		\rho\left(\phi(x)+\phi(y)+\phi(z)-q \phi\left(\sqrt[s]{\frac{x^s+y^s+z^s}{q}}\right)\right) \leq  \alpha(x, y, z)
	\end{equation}
	and: $\alpha: \mathbb{R}^3 \longrightarrow[0, \infty)$ is a mapping such that:
	\begin{equation}\label{Eq-6}
		\lim_{n \rightarrow \infty} \frac{\alpha\left(2^{n / s} x, 2^{n / s} y, 2^{n / s} z\right)}{2^n}=0 ;\qquad \sum_{j=0}^{\infty} \frac{\alpha\left(2^{j / s} x, 2^{j / s} x,-2^{(j+1)/s} x\right)}{2^j}<\infty
	\end{equation}
	for all $x, y, z \in \mathbb{R}$. 
	Then there exists a unique radical functional mapping $\rho: \mathbb{R} \rightarrow Y_\rho$ such that :
	\begin{equation}\label{Eq-7}
		\rho(\phi(x)-q \phi(0)-A(x)) \leq 
		\frac{1}{2} \sum_{j=0}^{\infty} \frac{1}{2^j} \alpha\left(2^{j / s} x, 2^{j / s}, 2^{\frac{j+1}{s}}\right)
	\end{equation}
	for all
	$x \in \mathbb{R}$.
\end{theorem}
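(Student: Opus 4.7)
The plan is to mirror the argument of Theorem~\ref{Theorem-1} but with the direction of iteration reversed: instead of the sequence $2^{n}\phi(x/2^{n/s})$, whose analysis leaned on $\Delta_2$ to push powers of $2$ past $\rho$, I would work with $\phi(2^{n/s}x)/2^{n}$, for which convexity alone already yields $\rho(\phi(2^{n/s}x)/2^{n}) \le \rho(\phi(2^{n/s}x))/2^{n}$. The structural role of $\Delta_2$ will be replaced by careful use of the convexity bound $\rho\bigl(\sum \lambda_j u_j\bigr) \le \sum \lambda_j \rho(u_j)$, valid whenever $\lambda_j \ge 0$ and $\sum \lambda_j \le 1$, with the shortfall absorbed by $\rho(0)=0$.

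First I would extract the basic inequality by substituting $y=x$, $z=-2^{1/s}x$ in \eqref{Eq-5}; since $s$ is odd, $x^{s}+x^{s}+(-2^{1/s}x)^{s}=0$, so
\[
\rho\bigl(2\phi(x)+\phi(-2^{1/s}x)-q\phi(0)\bigr) \le \alpha(x,x,-2^{1/s}x).
\]
Putting $x=y=z=0$ in \eqref{Eq-5} and using that summability in \eqref{Eq-6} forces $\alpha(0,0,0)=0$, I get $\rho((3-q)\phi(0))=0$; together with $|q|\le 1$ and monotonicity of $\rho$ this gives $\phi(0)=0$, making the $q\phi(0)$ term in \eqref{Eq-7} vacuous. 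A further substitution $y=-x$, $z=0$ yields the approximate oddness $\rho(\phi(x)+\phi(-x)) \le \alpha(x,-x,0)$, which I would combine (via a convex combination) with the previous display to replace $\phi(-2^{1/s}x)$ by $-\phi(2^{1/s}x)$ and arrive at an inequality of the form $\rho(2\phi(x)-\phi(2^{1/s}x))\le\beta(x)$, where $\beta$ still satisfies the summability condition in \eqref{Eq-6}.

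The next step is the telescoping identity
\[
\phi(x)-\frac{\phi(2^{n/s}x)}{2^{n}} = \sum_{j=0}^{n-1}\frac{1}{2^{j+1}}\bigl(2\phi(2^{j/s}x)-\phi(2^{(j+1)/s}x)\bigr).
\]
Since the weights $1/2^{j+1}$ sum to less than $1$, convexity gives
\[
\rho\!\left(\phi(x)-\frac{\phi(2^{n/s}x)}{2^{n}}\right) \le \frac{1}{2}\sum_{j=0}^{n-1}\frac{\alpha(2^{j/s}x,2^{j/s}x,-2^{(j+1)/s}x)}{2^{j}},
\]
and the shifted version (starting at $j=m$, exploiting $\rho(u/2^{m})\le\rho(u)/2^{m}$) shows that $\{\phi(2^{n/s}x)/2^{n}\}$ is $\rho$-Cauchy, so $\rho$-completeness supplies a limit $A(x)$.

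To close the argument I would pass to the limit in the previous display to obtain \eqref{Eq-7}, verify that $A$ satisfies \eqref{Eq-1} by applying \eqref{Eq-5} at the scaled point $(2^{n/s}x,2^{n/s}y,2^{n/s}z)$, dividing by $2^{n}$ via convexity and letting $n\to\infty$ using the first condition of \eqref{Eq-6}, and finally prove uniqueness by bounding $\rho(A(x)-A'(x))$ with the same telescoping scheme applied simultaneously to $A$ and $A'$. The main obstacle I anticipate is precisely the absence of $\Delta_2$: every ``triangle-type'' step, and in particular the passage from the finite $n$-truncated bound to \eqref{Eq-7} in the limit $n\to\infty$, must be engineered as a convex combination with weights summing to at most $1$, or obtained through a Fatou-type lower-semicontinuity argument on $\rho$, and this same delicacy reappears at the final uniqueness step where one cannot simply apply a factor-of-$2$ triangle inequality to $\rho(A-A')$.
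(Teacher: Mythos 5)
Your overall architecture --- telescoping $\phi(2^{n/s}x)/2^{n}$ with convexity weights $1/2^{j+1}$, $\rho$-completeness for the limit $A$, Fatou-type lower semicontinuity to pass to \eqref{Eq-7}, rescaling plus the first condition of \eqref{Eq-6} to show $A$ satisfies \eqref{Eq-1}, and a halved difference in the uniqueness step --- is exactly the paper's. But two of your preliminary reductions have genuine gaps. First, $\phi(0)=0$ does not follow: at $x=0$ the series in \eqref{Eq-6} is $\sum_{j}\alpha(0,0,0)/2^{j}=2\alpha(0,0,0)$, which converges for \emph{every} value of $\alpha(0,0,0)$, so nothing forces $\alpha(0,0,0)=0$ (contrast Theorem \ref{Theorem-1}, where the weights $(\tau^{2}/2)^{j}$ are unbounded because $\tau\ge 2$ for a convex modular). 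The paper accordingly does not kill $\phi(0)$; it sets $\hat{\phi}(x)=\phi(x)-q\phi(0)$, rewrites the substitution $y=x$, $z=-2^{1/s}x$ as $\rho(\hat{\phi}(2^{1/s}x)-2\hat{\phi}(x))\le\alpha(x,x,-2^{1/s}x)$, and runs the whole argument for $\hat{\phi}$ --- which is precisely why the term $q\phi(0)$ survives into \eqref{Eq-7}.

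Second, your approximate-oddness patch fails in the absence of $\Delta_2$. From the two controlled quantities $u=2\phi(x)+\phi(-2^{1/s}x)-q\phi(0)$ and $v=\phi(2^{1/s}x)+\phi(-2^{1/s}x)+(1-q)\phi(0)$, a convex combination only yields $\rho(\lambda u-\mu v)\le\lambda\rho(u)+\mu\rho(v)$ with $\lambda+\mu=1$, hence at best a bound on $\rho\bigl(\tfrac12\bigl(2\phi(x)-\phi(2^{1/s}x)\bigr)+\cdots\bigr)$, never on $\rho\bigl(2\phi(x)-\phi(2^{1/s}x)\bigr)$ itself; and it is the un-halved difference that the telescoping requires, since the weights $1/2^{j+1}$ already exhaust the convexity budget (with the halved differences the weights become $1/2^{j}$, which sum to $2$). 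Moreover, the extra term $\alpha(2^{(j+1)/s}x,-2^{(j+1)/s}x,0)$ your $\beta$ would carry is not controlled by either condition in \eqref{Eq-6}. The paper sidesteps all of this by implicitly taking $\phi$ odd (as it does explicitly in Theorem \ref{Theorem-1}), so that $\phi(-2^{1/s}x)=-\phi(2^{1/s}x)$ holds exactly. The rest of your plan does match the paper's proof, including your (correct) anticipation that the limit inequality needs the Fatou property --- which the paper invokes even though it is not listed among the hypotheses of this theorem --- and that uniqueness must be run on $\rho\bigl(\tfrac12(A-A')\bigr)$ rather than $\rho(A-A')$.
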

\begin{proof}
	Taking $y=x$ and $z=-2^{1 / s} x$ in \eqref{Eq-5} we get
	\begin{equation*}
		\rho\left(2 \phi(x)-\phi\left(2^{1 / s} x\right)-q \phi(0)\right) \leq  \alpha\left(x, x,-2^{1 / s} x\right)
	\end{equation*}
	Hence: 
	$$\rho\left(\hat{\phi}\left(2^{1 / s} x\right)-2 \hat{\phi}(x)\right) \leq  \alpha\left(x, x,-2^{1 / s} x\right)$$
	where $\hat{\phi}(x)=\phi(x)-q \phi(0)$.
	
	Then we obtain from the convexity of $\rho$ and $\sum_{i=0}^{n-1} \frac{1}{2^{j+1}} \leq  1$
	\begin{align*}
			\rho\left(\hat{\phi}(x)-\frac{\hat{\phi}\left(2^{n / s} x\right)}{2^n}\right)
			&=\rho\left(\sum_{j=0}^{n-1} \frac{1}{2^{j+1}}\left(2 \hat{\phi}\left(2^{j / s} x\right)-\hat{\phi}\left(2^{(j+1)/s} x\right)\right)\right) \\
			& \leq  \sum_{j=0}^{n-1} \frac{1}{2^{j+1}} \rho\left(2 \hat{\phi}\left(2^{{j / s} }x\right)-\hat{\phi}\left(2^{\frac{j+1}{s} }x\right)\right) \\
		&\leq  \frac{1}{2} \sum_{j=0}^{n-1} \frac{1}{2^j} \alpha\left(2^{j / s} x, 2^{j / s} x,-2^{\frac{j+1}{s}}x\right)
	\end{align*}
	for all $x \in \mathbb{R}$ and all positive integers $n>1$.
	
	Then,
	\begin{align*}
			\rho\left(\frac{\hat{\phi}\left(2^{\frac{n}{s} }x\right)}{2^n}-\frac{\hat{\phi}\left(2^{\frac{m}{s} }x\right)}{2^m}\right) &\leq  \frac{1}{2^m} \rho\left(\frac{\hat{\phi}\left(2^{\frac{n-m}{s}} \cdot 2^{\frac{m}{s}}x\right)}{2^{n-m}}-\hat{\phi}\left(2^{\frac{m}{s} }x\right)\right) \\
			& \leq  \frac{1}{2^m} \sum_{j=0}^{n-m-1} \frac{1}{2^{j+1}} \alpha\left(2^{\frac{m+j}{s}}x, 2^{\frac{m+j}{s}} x,-2^{\frac{m+j+1}{s}} x\right) \\
		&
		=\frac{1}{2} \sum_{j=m}^{n-1} \frac{1}{2^j} \alpha\left(2^{j / s} x, 2^{j / s} x,-2^{\frac{j+1}{s}} x\right)
	\end{align*}
	for all $x \in \mathbb{R}$ and all   $m,n\in \mathbb{N}$ with $n>m$. 
	
	Then $\left\{\dfrac{\hat{\phi}\left(2^\frac{n}{s} x\right)}{2^n}\right\}$ is  a $\rho$-Cauchy sequence in $Y_\rho$ wish is $\rho$-complete. Then it's $\rho$-convergent to a mapping
	$A: \mathbb{R} \rightarrow Y_\rho$. Thus we can write:
	$$A(x)=\rho-\lim _{n \rightarrow \infty} \frac{\hat{\phi}\left(2^{\frac{n}{s}}x\right)}{2^n} \text{ i.e. }
	\lim _{n \rightarrow \infty} \rho\left(\frac{\hat{\phi}\left(2^{\frac{n}{s}}x\right)}{2^n}-A(x)\right)=0
	$$
	for all $x \in \mathbb{R}$. By apply the Fatou property, we have:
	\begin{align*}
			\rho(\hat{\phi}(x)-A(x)) & \leq  \liminf _{n \rightarrow \infty} \rho\left(\hat{\phi}(x)-\frac{\hat{\phi}\left(2^{n/s} x\right)}{2^n}\right) \\
			& \leq  \frac{1}{2} \sum_{j=0}^{\infty} \frac{1}{2^j} \alpha\left(2^{j / s} x, 2^{j / s} x,-2^{\frac{j+1}{s}}x\right)
	\end{align*}
	for all $x \in \mathbb{R}$.
	
	In order to prove that $A$ satisfies \eqref{Eq-1},
	we note that 
	\begin{align*}
		&\rho\left(\frac{1}{5} A(x)+\frac{1}{5} A(y)+\frac{1}{5} A(z)-\frac{1}{5} q A\left(\sqrt[s]{\dfrac{x^s+y^s+z^s}{q}}\right)\right)\\
		&\leq\frac{1}{5} \rho\left(A(x)-\frac{\hat{\phi}\left(2^{\frac{n}{s}}x\right)}{2^n}\right)+\frac{1}{5} \rho\left(A(y)-\frac{\hat{\phi}\left(2^{\frac{n}{s} }y\right)}{2^n}\right) \\
		&
		+\frac{1}{5} \rho\left( A(z)-\frac{\hat{\phi}\left(2^{\frac{n}{s}}z \right)} {2^n}\right)
		+ 
		\dfrac{1}{5} \rho\left(q A  \left(\sqrt[s]{\frac{x^s+y^s+z^s}{q}}\right) 
		- 
		 \dfrac{q}{2^n} \hat{\phi}\left( 2^{n/s}\sqrt[s]{\frac{x^s+y^s+z^s}{q}}\right) 
		\right)\\
		& \dfrac{1}{5} \rho\left( \frac{\hat{\phi}\left(2^{\frac{n}{s}}x\right)}{2^n} 
		+
		\frac{\hat{\phi}\left(2^{\frac{n}{s}}y\right)}{2^n}
		+ 
		\frac{\hat{\phi}\left(2^{\frac{n}{s}}z\right)}{2^n} 
		- 
		\dfrac{q}{2^n} \hat{\phi}\left(2^{n/s} \sqrt[s]{\frac{x^s+y^s+z^s}{q}}\right) 
		\right) \\
		& \leq\frac{1}{5} \rho\left(A(x)-\frac{\hat{\phi}\left(2^{\frac{n}{s}}x\right)}{2^n}\right)+\frac{1}{5} \rho\left(A(y)-\frac{\hat{\phi}\left(2^{\frac{n}{s} }y\right)}{2^n}\right) \\
		&
		+\frac{1}{5} \rho\left( A(z)-\frac{\hat{\phi}\left(2^{\frac{n}{s}}z \right)} {2^n}\right)
		+ 
		\dfrac{1}{5} \rho\left(q A  \left(\sqrt[s]{\frac{x^s+y^s+z^s}{q}}\right) 
		- 
		\dfrac{q}{2^n} \hat{\phi}\left( 2^{n/s}\sqrt[s]{\frac{x^s+y^s+z^s}{q}}\right) 
		\right)\\
		& \dfrac{1}{5} \rho\left( \frac{\hat{\phi}\left(2^{\frac{n}{s}}x\right)}{2^n} 
		+
		\frac{\hat{\phi}\left(2^{\frac{n}{s}}y\right)}{2^n}
		+ 
		\frac{\hat{\phi}\left(2^{\frac{n}{s}}z\right)}{2^n} 
		- 
		\dfrac{q}{2^n} \hat{\phi}\left(2^{n/s} \sqrt[s]{\frac{x^s+y^s+z^s}{q}}\right) 
		\right) - (q^2 - 3q) \dfrac{\phi(0)}{2^n}\\
		&+\frac{1}{5} \rho\left( A(z)-\frac{\hat{\phi}\left(2^{\frac{n}{s}}z \right)} {2^n}\right)
		+ 
		\dfrac{1}{5} \rho\left(q A  \left(\sqrt[s]{\frac{x^s+y^s+z^s}{q}}\right) 
		- 
		\dfrac{q}{2^n} \hat{\phi}\left( 2^{n/s}\sqrt[s]{\frac{x^s+y^s+z^s}{q}}\right) 
		\right)\\
		& \dfrac{1}{10} \rho\left( \frac{\hat{\phi}\left(2^{\frac{n}{s}}x\right)}{2^{n-1}} 
		+
		\frac{\hat{\phi}\left(2^{\frac{n}{s}}y\right)}{2^{n-1}}
		+ 
		\frac{\hat{\phi}\left(2^{\frac{n}{s}}z\right)}{2^{n-1}} 
		- 
		\dfrac{q}{2^{n-1}} \hat{\phi}\left(2^{n/s} \sqrt[s]{\frac{x^s+y^s+z^s}{q}}\right) 
		\right) -\dfrac{1}{10}\rho\left(   \dfrac{2q^2 -6q}{2^n}   {\phi(0)} \right) \\
		&\leq\frac{1}{5} \rho\left(A(x)-\frac{\hat{\phi}\left(2^{\frac{n}{5}}x\right)}{2^n}\right)+\frac{1}{5} \rho\left(A(y)-\frac{\hat{\phi}\left(2^{\frac{n}{s} }y\right)}{2^n}\right) \\
		&+\frac{1}{5} \rho\left( A(z)-\frac{\hat{\phi}\left(2^{\frac{n}{s}}z \right)} {2^n}\right)
		+ 
		\dfrac{1}{5} \rho\left(q A  \left(\sqrt[s]{\frac{x^s+y^s+z^s}{q}}\right) 
		- 
		\dfrac{q}{2^n} \hat{\phi}\left( 2^{n/s}\sqrt[s]{\frac{x^s+y^s+z^s}{q}}\right) 
		\right)\\
		& -\dfrac{1}{5}\dfrac{\alpha( 2^{n/s}x,2^{n/s}y,2^{n/s}z)}{2^n} +\dfrac{1}{10}\rho\left(   \dfrac{2q^2 -6q}{2^n}   {\phi(0)} \right)
		\rightarrow 0 \text { as } n \rightarrow \infty \text { (because } \phi(0) \in Y_p
	\end{align*}
	and $\lim _{n \rightarrow \infty} \frac{2 q^2-6 q}{2^n}=0$.
	
	Then $$A(x)+A(y)+A(z) = q A  \left(\sqrt[s]{\frac{x^s+y^s+z^s}{q}}\right)  $$
	
	Now, we prove that $A$ is unique Let $A^{\prime}$ be an other radical mapping satisfying the estimation \eqref{Eq-7}.
	
	We have 
	\begin{align*}
		\rho\left(\frac{1}{2} A(x)-\frac{1}{2} A^{\prime}(x)\right)&\leq 
		\frac{1}{2} \rho\left(\frac{A\left(2^{\frac{n}{s}}x\right)}{2^n}-\frac{\hat{\phi}\left(2^ \frac{n}{s}x\right)}{2^n}\right)+\frac{1}{2} \rho\left(\frac{\hat{\phi}\left(2^{\frac{n}{s}}x\right)}{2^n}-\frac{A^{'}\left(2^{\frac{n}{s}} x\right)}{2^n}\right)\\
		& \leq \frac{1}{2^{n+1}} \rho\left(A\left(2^{\frac{n}{s}}x\right)-\hat{\phi}\left(2^{\frac{n}{s}}x\right)\right)+\frac{1}{2^{n+1}} \rho\left(\hat{\phi}\left(2^{\frac{n}{s}}x\right)-A^{'}\left(2^{\frac{n}{s}  }x\right)\right)\\
		&\leq \frac{1}{2^{n}}\sum_{j=0}^{\infty} \frac{1}{2^{j+1}} \alpha\left(2^{\frac{n+j}{s}} x, 2^{\frac{n+j}{s}} x,-2^{\frac{n+j+1}{s}} x\right)\\
		&\leq  \sum_{j=0}^{\infty} \frac{1}{2^{j+1}} \alpha\left(2^{\frac{ j}{s}} x, 2^{\frac{ j}{s}} x,-2^{\frac{ j+1}{s}} x\right) \rightarrow 0 \text { as } n \rightarrow \infty 
	\end{align*}
	Then $A=A^{\prime}$, this complete the proof
\end{proof}
\section{Stability of \eqref{Eq-1} in modular space using the point five theorem}
The following theorem of fixed point in modular space will play in important role in establishing our stability remits of \eqref{Eq-1}.
\begin{theorem}[\cite{Khamsi2008}]
	Let $Y_\rho$ be a modular space and $C$ be a $p$-complete nonempty subset of $Y_\rho$. A mapping $T: C \rightarrow Y_\rho$ is called quasi-contration if and only if there exits $K<1$ such that:
	\begin{equation*}
			\rho(T(x)-T(y)) \leq   K \max \{\rho(x-y), \rho(x-T(x)), \rho(y-T(y)), 
			\rho(x-T(y)), \rho(y-T(x))\} .
	\end{equation*}
	Let $x \in C$ such that: $\delta_p(x)=\sup\left\{\rho\left(T^n(x)-T^m(x)\right) ; m,n \in \mathbb{N}\right\}$.
	
	Then $\left\{T^n(x)\right\}$ is $\rho$-convergent to a point $w \in C$. Moreover, if $\rho(\omega-T(\omega))<\infty$ and $\rho(x-T(\omega))<\infty$, Then the $\rho$-limit of $T(x)^n$ is a fixed point of $T$ and   if $\bar{\omega}$ is any fixed point of $T$ in $C$ such that $\rho(\omega-\bar{\omega})<\infty$, then $\omega=\bar{\omega}$.
\end{theorem}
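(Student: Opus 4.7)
The plan is to adapt the classical Ćirić quasi-contraction argument to the modular setting, so the proof will have three stages: establishing that the orbit $\{T^n(x)\}$ is $\rho$-Cauchy, extracting the limit $\omega$ via $\rho$-completeness of $C$, and then verifying the fixed-point and uniqueness conclusions under the stated finiteness hypotheses.

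First, I would fix $x \in C$ with $\delta_\rho(x) < \infty$ and study the ``partial diameters'' $D_n = \sup\{\rho(T^i(x) - T^j(x)) : i,j \geq n\}$. The central observation is that for any $i,j \geq n+1$, the quasi-contraction inequality bounds $\rho(T^i(x) - T^j(x))$ by $K$ times a maximum of five expressions of the form $\rho(T^p(x) - T^q(x))$ with $p,q \geq n$. Taking the supremum over $i,j \geq n+1$ yields $D_{n+1} \leq K D_n$, and iterating gives $D_n \leq K^n \delta_\rho(x) \to 0$ since $K < 1$ and $\delta_\rho(x)$ is finite. This immediately produces the $\rho$-Cauchy property of $\{T^n(x)\}$, so $\rho$-completeness of $C$ yields some $\omega \in C$ with $T^n(x) \overset{\rho}{\to} \omega$.

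Next, to identify $\omega$ as a fixed point under the hypotheses $\rho(\omega - T(\omega)) < \infty$ and $\rho(x - T(\omega)) < \infty$, I would apply the quasi-contraction with arguments $T^n(x)$ and $\omega$:
\begin{equation*}
\rho(T^{n+1}(x) - T(\omega)) \leq K \max\bigl\{\rho(T^n(x) - \omega),\; \rho(T^n(x) - T^{n+1}(x)),\; \rho(\omega - T(\omega)),\; \rho(T^n(x) - T(\omega)),\; \rho(\omega - T^{n+1}(x))\bigr\}.
\end{equation*}
All terms involving only orbit differences tend to $0$ by the previous step, and the hypothesis $\rho(x - T(\omega)) < \infty$ lets one bound $\rho(T^n(x) - T(\omega))$ similarly by invoking quasi-contraction once more to reduce it back to orbit data. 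Passing to $\liminf$ and using convexity of $\rho$ together with the decomposition $\omega - T(\omega) = \tfrac{1}{2}(2\omega - 2T^{n+1}(x)) + \tfrac{1}{2}(2T^{n+1}(x) - 2T(\omega))$ (combined with $\Delta_2$-style or Fatou-type reasoning available in modular spaces) forces $\rho(\omega - T(\omega)) \leq K\rho(\omega - T(\omega))$, hence $\rho(\omega - T(\omega)) = 0$ and $T(\omega) = \omega$. Finally, for uniqueness, if $\bar\omega = T(\bar\omega)$ with $\rho(\omega - \bar\omega) < \infty$, then the quasi-contraction applied to $\omega$ and $\bar\omega$ collapses the max to $\rho(\omega - \bar\omega)$ itself, giving $\rho(\omega - \bar\omega) \leq K\rho(\omega - \bar\omega)$ and therefore $\omega = \bar\omega$.

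The main obstacle I expect is the passage to the limit in the fixed-point step: unlike in a metric space, the modular $\rho$ is not subadditive on general sums, only on convex combinations, so controlling $\rho(\omega - T(\omega))$ from the orbit convergence requires careful use of convexity together with either the Fatou property or the $\Delta_2$-condition to move factors of $2$ across $\rho$; handling the term $\rho(T^n(x) - T(\omega))$ without a priori boundedness is the most delicate point, and this is precisely why the hypotheses $\rho(\omega - T(\omega)) < \infty$ and $\rho(x - T(\omega)) < \infty$ are imposed.
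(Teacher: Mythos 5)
This statement is imported verbatim from Khamsi's paper \cite{Khamsi2008}; the present paper states it without proof and uses it as a black box, so there is no in-paper argument to compare yours against. Judged on its own merits, your first and last stages are correct and are exactly the standard \'Ciri\'c-type argument transplanted to modulars: the bound $D_{n+1}\leq K D_n$ holds because every term in the max is an orbit difference with both indices at least $n$, so $D_n\leq K^n\delta_\rho(x)\to 0$ gives $\rho$-Cauchyness, and the uniqueness step correctly collapses the max to $\rho(\omega-\bar\omega)$ using $T\omega=\omega$, $T\bar\omega=\bar\omega$ and $\rho(-u)=\rho(u)$, with the hypothesis $\rho(\omega-\bar\omega)<\infty$ being what permits the cancellation.

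The gap is in the fixed-point step. Write $u_n=\rho(T^n x-T\omega)$ and $b=\rho(\omega-T\omega)$. Your recursion gives $u_{n+1}\leq K\max\{b,\,u_n,\,e_n\}$ with $e_n\to 0$, anchored at $u_0=\rho(x-T\omega)<\infty$; iterating yields only $\limsup_n u_n\leq K\max\{b,\limsup_n u_n\}$, hence $\limsup_n u_n\leq Kb$, and this alone does not force $b=0$. Moreover, your decomposition $\omega-T\omega=\tfrac12(2\omega-2T^{n+1}x)+\tfrac12(2T^{n+1}x-2T\omega)$ produces $\rho(2(\cdot))$ terms that cannot be controlled without the $\Delta_2$-condition, and this theorem (as the title of \cite{Khamsi2008} emphasizes) is precisely about dispensing with $\Delta_2$. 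The missing ingredient is the Fatou property, which is a standing hypothesis in \cite{Khamsi2008}: since $T^n x\overset{\rho}{\to}\omega$, Fatou gives $b=\rho(\omega-T\omega)\leq\liminf_n\rho(T^n x-T\omega)\leq\limsup_n u_n$, so the recursion becomes $\limsup_n u_n\leq K\limsup_n u_n$ with $\limsup_n u_n<\infty$, forcing $\limsup_n u_n=0$ and hence $b=0$. (Equivalently, once $u_n\to 0$ is known you can avoid factors of $2$ entirely via $\rho\bigl(\tfrac{\omega-T\omega}{2}\bigr)\leq\rho(\omega-T^n x)+\rho(T^n x-T\omega)\to 0$, so $T\omega=\omega$ by uniqueness of $\rho$-limits.) Without Fatou, your chain of inequalities stalls at $\rho\bigl(\tfrac{\omega-T\omega}{2}\bigr)\leq K\rho(\omega-T\omega)$, which for a convex modular is weaker than what convexity already gives and concludes nothing.
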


\begin{theorem}
	Let $Y_\rho$ be a $\rho$-complete modular space satisfying $\Delta_2$-condition and $\phi: \mathbb{R} \rightarrow Y_\rho$ be a mapping such that: 
	\begin{equation}\label{Eq-8}
		\rho\left(\phi(x)+\phi(y)+\phi(z)-q \phi\left(\sqrt[s]{\dfrac{x^s+y^s+z^s}{q}}\right)\right) \leq  \alpha(x, y, z)
	\end{equation}
	 for all $x, y, z \in \mathbb{R}$, where $\alpha: \mathbb{R}^3 \rightarrow[0,\infty)$ is a mapping  such that: $$\alpha\left(2^{1 / s} x, 2^{1 / s} x, -2^{2 / s} x\right) \leq  2 L \alpha(x,x,-2^{1 / s}x) \qquad (0<L<1)$$ and 
	 \begin{equation}
	 	\lim_{n \rightarrow \infty} \frac{\alpha\left(2^{n / s} x, 2^{n /s} y, 2^{n / s} z\right)}{2^n}=0..
	 \end{equation}
	Then there exists a unique radical mapping $\hat{\phi}: \mathbb{R} \rightarrow Y_\rho$ such that $\rho(\phi(x)-\hat{\phi}(x)) \leq \dfrac{1}{2(1-L)} \alpha(x,x,-2^{1/s} x)$ for all $x \in \mathbb{R}$.
\end{theorem}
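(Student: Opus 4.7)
My plan is to construct $\hat\phi$ as the unique fixed point of the operator
\[
(Tg)(x)=\tfrac12\,g(2^{1/s}x).
\]
The motivation is that any solution of \eqref{Eq-1} with $\hat\phi(0)=0$ satisfies $2\hat\phi(x)=\hat\phi(2^{1/s}x)$ (substitute $y=x$, $z=-2^{1/s}x$), so every radical mapping is automatically a fixed point of $T$; I will arrange matters so that the fixed point produced by contraction also satisfies the full radical equation.

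First, I would derive the one-variable estimate. Plugging $x=y=z=0$ in \eqref{Eq-8} gives $\rho((3-q)\phi(0))\leq\alpha(0,0,0)$, while the hypothesis $2^{-n}\alpha(0,0,0)\to 0$ forces $\alpha(0,0,0)=0$; since $|q|\leq 1$ makes $3-q\neq 0$, this yields $\phi(0)=0$. Substituting $y=x$, $z=-2^{1/s}x$ and invoking oddness of $\phi$ (as in Theorem~\ref{Theorem-1}) then gives $\rho(2\phi(x)-\phi(2^{1/s}x))\leq\alpha(x,x,-2^{1/s}x)$, and convexity converts this into
\[
\rho\bigl(\phi(x)-T\phi(x)\bigr)\leq\tfrac12\,\alpha(x,x,-2^{1/s}x).
\]

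Next, I would equip the space of mappings $g:\mathbb{R}\to Y_\rho$ with the generalized modular $\widetilde\rho(g-h)=\inf\{C\geq 0:\rho(g(x)-h(x))\leq C\alpha(x,x,-2^{1/s}x)\text{ for all }x\in\mathbb{R}\}$. Combining the growth hypothesis $\alpha(2^{1/s}x,2^{1/s}x,-2^{2/s}x)\leq 2L\alpha(x,x,-2^{1/s}x)$ with convexity yields
\[
\rho(Tg(x)-Th(x))\leq\tfrac12\rho(g(2^{1/s}x)-h(2^{1/s}x))\leq L\,\widetilde\rho(g-h)\,\alpha(x,x,-2^{1/s}x),
\]
so $T$ is a strict contraction with ratio $L<1$. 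Applying the fixed-point theorem of Khamsi \cite{Khamsi2008} with starting point $\phi$ produces a unique fixed point $\hat\phi=\rho\text{-}\lim_n T^n\phi=\rho\text{-}\lim_n 2^{-n}\phi(2^{n/s}x)$ with $\widetilde\rho(\phi-\hat\phi)\leq\widetilde\rho(\phi-T\phi)/(1-L)\leq 1/(2(1-L))$, which is exactly the claimed estimate.

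Finally, to verify that $\hat\phi$ satisfies \eqref{Eq-1}, I would apply \eqref{Eq-8} to $(2^{n/s}x,2^{n/s}y,2^{n/s}z)$ and scale by $2^{-n}$; a five-term convex-combination decomposition of $\tfrac15\bigl(\hat\phi(x)+\hat\phi(y)+\hat\phi(z)-q\hat\phi(\sqrt[s]{(x^s+y^s+z^s)/q})\bigr)$, analogous to the one in the proof of Theorem~\ref{Theorem-2}, bounds its $\rho$-value by a sum whose first four terms vanish by the $\rho$-convergence defining $\hat\phi$ and whose fifth is controlled by $2^{-n}\alpha(2^{n/s}x,2^{n/s}y,2^{n/s}z)\to 0$. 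I expect the main technical obstacle to be verifying the finiteness conditions the quasi-contraction theorem requires (finite orbit diameter $\delta_{\widetilde\rho}(\phi)$ and finite $\widetilde\rho(\hat\phi-T\hat\phi)$); these follow from the telescoping bound $\widetilde\rho(T^{n+1}\phi-T^n\phi)\leq L^n/2$, summable because $L<1$, but the argument requires delicate use of the $\Delta_2$-condition when combining the iterated modular estimates through the scaling by $2$.
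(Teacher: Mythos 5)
Your proposal follows essentially the same route as the paper: the same scaling operator $\Lambda(g)(x)=\tfrac12 g(2^{1/s}x)$, the same induced modular $\hat\rho(g)=\inf\{\lambda>0:\rho(g(x))\leq\lambda\,\alpha(x,x,-2^{1/s}x)\}$ on the function space, the same verification that the growth condition on $\alpha$ makes $\Lambda$ an $L$-contraction, the same appeal to Khamsi's quasi-contraction theorem (with the finiteness of the orbit diameter checked via the telescoping/$\Delta_2$ estimate), and the same five-term convex decomposition to show the limit satisfies the radical equation. The argument is correct and matches the paper's proof in all essentials.
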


\begin{proof}
We define the set: $N=\{g : \mathbb{R} \rightarrow Y_p\}$ and we introduce the mapping $\hat{\rho}$ on $N$ as :
	\begin{equation*}
			  \hat{\rho}(g)=\inf \{\lambda>0 : \rho(g(x)) \leq  \lambda\alpha(x,x,-2^{1/s} x)\}
	\end{equation*}
	by same methode as in the proof of {Theorem 2.1 in \cite{Sadeghi}}, we deduce that $\hat{\rho}$ is convex modular on $N$, and $N$ is $\hat{\rho}$-complete.
	
	Now, we introduce the mapping $A: N_{\hat{\rho}} \rightarrow N_{\hat{\rho}}$ as:
	\begin{equation*}
		\Lambda(g)(x)=\dfrac{1}{2} g\left(2^{1/s} x\right) ; \qquad
		g \in N \text { and } x \in \mathbb{R} .
	\end{equation*}
	
	Let $f, g \in N_{\hat{\rho}}$ and $c$ be positive real number with: $\hat{\rho}(f-g) \leq  c$, we have:
	$\rho(f(x)-g(x)) \leq  c\alpha(x, x,-2^{1 / s} x)$ for all $x \in \mathbb{R}$.
	
	Then
	\begin{align*}
			\rho\left(\frac{f\left(2^{1 /s}x\right)}{2}-\frac{g\left(2^{1 / s} x\right)}{2}\right) \leq  \dfrac{1 }{2} \rho\left(f\left(2^{1 / s} x\right)-g\left(2^{1 / s} x\right)\right)\\
			\leq  \dfrac{1 }{2} c\alpha\left( \left(2^{1 / s} x\right),\left(2^{1 / s} x\right),\left(-2^{2 / s} x\right) \right) \\
			\leq Lc\alpha(x, x,-2^{1 / s} x)
	\end{align*}
	for all $x\in\mathbb{R}$.
 
	Hence $$\hat{\rho}(\Lambda f-\Lambda g) \leq  L \hat{\rho}(f-g)$$ for all $f, g \in N_{ \hat{\rho}}$. 
	
	So $A$ is $\hat{\rho}$-strict contraction.
	
	Letting $y=x$ and $z=-2^{1 / s} x$ in \eqref{Eq-8}, we get:
	\begin{equation*}
		\rho\left(2 \phi(x)-\phi\left(2^{1 / s} x\right)\right) \leq  \alpha\left(x, x,-2^{1 / s} x\right)
	\end{equation*}
	Then.
	
	$\begin{aligned} &   \rho\left(\frac{1}{2^2} \phi\left(2^{2 / s} x\right)-\phi(x)\right) \leq  1 / 2 \rho\left(1 / 2 \phi\left(2^{2 / s} x\right)-\phi\left(2^{1 / s} x\right)\right)   +1 / 2 \rho\left(\phi\left(2^{1 / s}\right)-2 \phi(x)\right) \\ & \leq  \frac{1}{2^2} \alpha\left(2^{1 / s} x, 2^{1 / s} x,-2^{2 / s} x\right)+\frac{1}{2} \alpha\left(x, x,-2^{1 / s} x\right) \\ & \end{aligned}$
	
	by mathematical induction we have:
	\begin{align*}
		\rho\left(\frac{1}{2^n} \phi\left(2^{n / s} x\right)-\phi(x) \right)&\leq
		\sum_{i=1}^{n} \frac{1}{2^i} \alpha\left(2^{\frac{i-1}{s}} x, 2^{\frac{i-1}{s}} x,-2^{\frac{i}{s}} x\right)\\
		& = \sum_{i=0}^{n-1} \frac{1}{2^{i+1}} \alpha\left(2^{\frac{i}{s}}, 2^{\frac{i}{s}} x,-2^{\frac{i+1}{s}} x\right)\\
		&\leq\sum_{i=0}^{n-1} \frac{1}{2^{i+1}}(2 L)^i \alpha\left(x, x,-2^{1 / s} x\right)\\
		&\leq  \frac{1}{2(1-L)} \alpha\left(x, x,-2^{1 / s} x\right)
	\end{align*}
	Then
	\begin{equation*}
		\begin{aligned}
			\rho\left(\frac{\phi\left(2^{n/s}x\right)}{2^n}-\frac{\phi\left(2^{\frac{m}{s}}x\right)}{2^m}\right) & \leq  \frac{1}{2} \rho\left(2 \frac{\phi\left(2^{\frac{n}{s}}x\right)}{2^n}-2 \phi(x)\right)+\frac{1}{2} \rho\left(2\frac{\phi\left(2^{\frac{m}{s}}\right)}{2^m}-2 \phi(x)\right) \\
			& \leq  \frac{\tau}{2} \rho\left(\frac{\phi\left(2^{\frac{n}{s} }x\right)}{2^n}-\phi(x)\right)+\frac{\tau}{2} \rho\left(\frac{\phi\left(2^{\frac{m}{s}}x\right)}{2^m}-\phi(x)\right) \\
			& \leq  \frac{\tau}{2(1-L)} \alpha\left(x, x,-2^{1 / s} x\right)
		\end{aligned}
	\end{equation*}
	
	Hence $\hat{\rho}\left(\Lambda^n \phi-\Lambda^m \phi\right) \leq  \dfrac{\tau}{2(1-L)}$.
	
	Thus $\delta_{\hat{p}}(\phi)<0$ and $\left\{\Lambda^n \phi\right\}$ is $\hat{\rho}$-convergent to $\hat{\phi} \in N_{\hat{\rho}}$.
	
	By $\hat{\rho}$-contraction of $\Lambda$ we have:
	\begin{equation*}
		\hat{\rho}\left(\Lambda^n \phi-\Lambda \hat{\phi}\right) \leq  L \hat{\rho}\left(\Lambda^{n-1} \phi-\hat{\phi}\right)
	\end{equation*}
	
	Letting $n \rightarrow \infty$ and apply $\hat{\rho}$-atom property:
	\begin{align*}
			\hat{\rho}(\Lambda \hat{\phi}-\hat{\phi}) & \leq  \liminf _{n \rightarrow \infty}   \hat{\rho}\left(\Lambda \hat{\phi}-\Lambda^{n} \phi\right) \\
			& \leq  L \liminf_{n \rightarrow \infty} \hat{\rho}\left(\hat{\phi}-\Lambda^{n-1} \phi\right)=0.
	\end{align*}
	Then, we conclude that $\hat{\phi}$ is a fixed puint of $\Lambda$.
	Then, we conclude that $\phi$ is a fxed poin of $A$.
	By same methode as in the proof of Theorem \ref{Theorem-1}, we deduce that $\hat{\phi}$ satifies \eqref{Eq-1}.
	
	Now, we must prove that $\hat{\phi}$ is unique.
	
	Let $\bar{\phi}$ be another fixed point of $\Lambda$.
	
	We have
	\begin{align*}
			\hat{\rho}(\bar{\phi}-\hat{\phi}) & =\hat{\rho}(\Lambda \bar{\phi}-\Lambda \hat{\phi}) \\
			& \leq  L \hat{\rho}(\bar{\phi}-\hat{\phi})
	\end{align*}
	
	Hence $\bar{\phi}=\hat{\phi}$
\end{proof}
 
 \section*{Declarations}
 
 \medskip
 
 \noindent \textbf{Availablity of data and materials}\newline
 \noindent Not applicable.
 
 \medskip

 \noindent \textbf{Competing  interest}\newline
 \noindent The authors declare that they have no competing interests.

 \medskip
 
 \noindent \textbf{Fundings}\newline
 \noindent  Authors declare that there is no funding available for this article.

 \medskip
 
 \noindent \textbf{Authors' contributions}\newline
 \noindent The authors equally conceived of the study, participated in its
 design and coordination, drafted the manuscript, participated in the
 sequence alignment, and read and approved the final manuscript. 
 
 \medskip


\begin{thebibliography}{99}

\bibitem{Amemiya}  I. Amemiya, On the representation of complemented modular lattices. J. Math. Soc. Jpn. 1957, 9, 263-279. 

\bibitem{Aoki} T. Aoki,  On the stability of the linear transformation in Banach spaces. J. Math. Soc. Jpn. 1950, 2, 64-66. 
	
\bibitem{Găvruţa} P. Găvruţa,  A generalization of the Hyers-Ulam-Rassias stability of approximately additive mappings. J. Math. Anal. Appl. 1994, 184, 431-436. 

\bibitem{Hyers}  D. H. Hyers,  On the stability of the linear functional equation. Proc. Nat. Acad. Sci. USA 1941, 27, 222-224.  
	
\bibitem{Khamsi2008} M. A. Khamsi, 
	Quasicontraction mapping in 
	modular spaces 
	without $\Delta_2-$condition,
	Fixed Point
	Theory Appl., (2008), Art. ID 916187.
	
\bibitem{Koshi}  S. Koshi, T. Shimogaki,  On F-norms of quasi-modular spaces. J. Fac. Sci. Hokkaido Univ. Ser. I Math. 1961, 15, 202-218. 
	
\bibitem{Krbec} M.  Krbec, Modular interpolation spaces I. Z. Anal. Anwendungen 1982, 1, 25-40. 

\bibitem{Maligranda} L. Maligranda,  Orlicz Spaces and Interpolation; Seminários de Matemática, 5; Universidade Estadual de Campinas, Departamento de Matemática: Campinas, Brazil, 1989.

\bibitem{Musielak} J.  Musielak, Orlicz Spaces and Modular Spaces; Lecture Notes in Mathematics, 1034; Springer: Berlin/Heidelberg, Germany, 1983.
	
\bibitem{Nakano} H. Nakano,  Modulared Semi-Ordered Linear Spaces; Maruzen Co., Ltd.: Tokyo, Japan, 1950.

\bibitem{Orlicz}  W. Orlicz,  Collected Papers. Part I, II; PWN—Polish Scientific Publishers: Warsaw, Poland, 1988.

\bibitem{Rassias} T. M. Rassias,  On the stability of the linear mapping in Banach spaces. Proc. Am. Math. Soc. 1978, 72, 297-300. 

\bibitem{Sadeghi}  Gh. Sadeghi, A fixed point approach to stability of functional equations in modular spaces, Bull.
Malays. Math. Sci. Soc. 37 (2) (2014), 333-344.

\bibitem{Ulam} S. M. Ulam, A Collection of Mathematical Problems, Interscience Tracts in Pure and Applied Mathematics, No. 8; Interscience Publishers: New York, NY, USA, 1960.
	
\bibitem{Yamamuro} S.  Yamamuro, On conjugate spaces of Nakano spaces. Trans. Am. Math. Soc. 1959, 90, 291-311. 
		
\end{thebibliography}
\end{document}